\renewcommand*\subjclass[2][2010]{%
  \def\@subjclass{#2}%
  \@ifundefined{subjclassname@#1}{%
    \ClassWarning{\@classname}{Unknown edition (#1) of Mathematics
      Subject Classification; using '2010'.}%
  }{%
    \@xp\let\@xp\subjclassname\csname subjclassname@#1\endcsname
  }%
}
\newtheorem{theorem}{Theorem}[section]
\newtheorem{lemma}[theorem]{Lemma}
\newtheorem{corollary}[theorem]{Corollary}
\newtheorem{proposition}[theorem]{Proposition}
\theoremstyle{definition}
\newtheorem{remark}[theorem]{Remark}
\numberwithin{equation}{section}
\renewcommand*\subjclass[2][2010]{%
  \def\@subjclass{#2}%
  \@ifundefined{subjclassname@#1}{%
    \ClassWarning{\@classname}{Unknown edition (#1) of Mathematics
      Subject Classification; using '1991'.}%
  }{%
    \@xp\let\@xp\subjclassname\csname subjclassname@#1\endcsname
  }%
}
\begin{document}
\title{Clarkson's type inequalities for positive
 $l_p$ sequences with $p\ge 2$}

\author{Romeo Me\v strovi\' c}
\address{Maritime Faculty, University of Montenegro, Dobrota 36,
85330 Kotor, Montenegro} \email{romeo@ac.me}

{\renewcommand{\thefootnote}{}%
\footnote{2010 {\it Mathematics Subject 
Classification.} Primary 26D15, 46B20. 

{\it Key words and phrases.} Clarkson's inequality, 
extension of Clarkson's inequalities, $l_p$-norm, $L_p$-norm.}

\begin{abstract} 
For a fixed $1\le p<+\infty$ denote by $\Vert\cdot\Vert_p$ 
the usual norm in the space $l_p$ (or $L_p$).
In this paper we prove that for all real numbers $p$ and $q$
such that $2\le p\le q$ holds
  $$
2(\Vert x\Vert_p^q+\Vert y\Vert_p^q)\le 
\Vert x+y\Vert_p^q  +\Vert x-y\Vert_p^q
  $$
for all nonnegative sequences $x=\{x_n\},y=\{y_n\}$ in $l_p$ (or 
nonnegative functions $x,y$ in $L_p$).
Note that the above inequality
with $p=q\ge 2$ reduces to the well known Clarkson's  inequality.
If in addition, holds $x_i\ge y_i$ for each $i=1,2,\ldots$ (or
$x\ge y$ a.e. in $L_p$), 
then we establish an improvement of the above inequality.

\end{abstract}
\maketitle
              \section{Introduction}
Let $(X,\Sigma,\mu)$ be a measure space with a positive Borel
measure $\mu$. For any $0<p<+\infty$ let $L^p=L^p(\mu)$ denotes the
usual Lebesgue space consisting of all  $\mu$-measurable complex-valued 
functions  $f: X \to\Bbb{C}$  such that
    $\int_X|f|^p\,d\mu<+\infty.$
Recall that for any $p\ge 2$ the usual norm 
$\Vert\cdot\Vert_p$ of $f\in L^p$ is
defined as $\Vert f\Vert_p=\left(\int_X|f|^p\,d\mu\right)^{1/p}$.
Further, the space $l_p$ is the space of all 
complex sequences $a=\{a_n\}$ such that $\sum_{n=1}^{+\infty}
|a_n|^p<+\infty$;
if $2\le p<+\infty$, then the usual 
$l_p$-norm is defined for $a\in l_p$ as $\Vert a\Vert_p:=
\left(\sum_{n=1}^{+\infty}|a_n|^p\right)^{1/p}$. 

In \cite{c} J. Clarkson introduced the concept of uniform convexity in 
Banach spaces and obtained that the spaces $L_p$ 
 (or $l_p$) with $1< p<+\infty$ are uniformly 
convex  
throughout the following inequalities.
    \begin{theorem}$($\cite[Theorem 2]{c}$)$
Let $2\le p<+\infty$, and let $q=p/(p-1)$ be the conjugate of 
$p$. Then for all $x$ and $y$ in $L_p$ or $l_p$, 
  \begin{equation}\label{ineq1.1}
2(\Vert x\Vert_p^p+\Vert y\Vert_p^p)^{q-1}\le 
\Vert x+y\Vert_p^q  +\Vert x-y\Vert_p^q
  \end{equation}
   \begin{equation}\label{ineq1.2}
 \Vert x+y\Vert_p^p  +\Vert x-y\Vert_p^p
\le 2(\Vert x\Vert_p^q+\Vert y\Vert_p^q)^{p-1}
    \end{equation}
  \begin{equation}\label{ineq1.3}
2(\Vert x\Vert_p^p+\Vert y\Vert_p^p)\le 
\Vert x+y\Vert_p^p  +\Vert x-y\Vert_p^p\le 
2^{p-1}(\Vert x\Vert_p^p+\Vert y\Vert_p^p).
    \end{equation}  
For $1<p\le 2$ these inequalities hold in the reverse sense.
 \end{theorem}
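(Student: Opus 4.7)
The plan is to derive all three inequalities from two pointwise inequalities combined with Minkowski's inequality, and then to obtain \eqref{ineq1.1} from \eqref{ineq1.2} by a symmetrization trick.

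I would first establish the following pointwise inequalities for all $a, b \in \mathbb{C}$ and $p \ge 2$, with $q = p/(p-1)$:
\begin{gather*}
2(|a|^p + |b|^p) \;\le\; |a+b|^p + |a-b|^p \;\le\; 2^{p-1}(|a|^p + |b|^p), \\
|a+b|^p + |a-b|^p \;\le\; 2(|a|^q + |b|^q)^{p-1}.
\end{gather*}
For the first line, the parallelogram identity $|a+b|^2 + |a-b|^2 = 2(|a|^2+|b|^2)$ combined with convexity of $t \mapsto t^{p/2}$ (valid since $p/2 \ge 1$) yields $2(|a|^2+|b|^2)^{p/2} \le |a+b|^p + |a-b|^p$ via Jensen, which together with the superadditivity $(|a|^2+|b|^2)^{p/2} \ge |a|^p + |b|^p$ gives the lower bound; the upper bound then follows from the lower bound by the change of variables $(A,B) = (a+b,a-b)$, since $a=(A+B)/2$, $b=(A-B)/2$. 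The second pointwise inequality is subtler: reducing to nonnegative reals with $a=1,\;b=t\in[0,1]$, one must verify $h(t) := 2(1+t^q)^{p-1}-(1+t)^p-(1-t)^p \ge 0$ on $[0,1]$; one checks $h(0)=h(1)=0$ and matches $h'(0)=h'(1)=0$, then establishes the sign of $h$ by a one-variable calculus argument on $h''$.

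Integrating the first pair of pointwise inequalities over $(X,\mu)$ (or summing over $n$ in the $l_p$ case) yields \eqref{ineq1.3} immediately. For \eqref{ineq1.2}, integrate the second pointwise inequality and apply Minkowski's inequality in $L^{p-1}$ to the nonnegative functions $|x|^q$ and $|y|^q$: since $q(p-1)=p$ we have $\bigl\||x|^q\bigr\|_{p-1}=\|x\|_p^q$, and thus
$$\|x+y\|_p^p + \|x-y\|_p^p \;\le\; 2\int(|x|^q + |y|^q)^{p-1}\,d\mu \;\le\; 2(\|x\|_p^q + \|y\|_p^q)^{p-1}.$$
To deduce \eqref{ineq1.1}, apply \eqref{ineq1.2} to the pair $\bigl((x+y)/2,(x-y)/2\bigr)$, whose sum is $x$ and whose difference is $y$; this gives $\|x\|_p^p + \|y\|_p^p \le 2^{1-p}(\|x+y\|_p^q + \|x-y\|_p^q)^{p-1}$, and raising both sides to the power $q-1 = 1/(p-1)$ and using $(p-1)(q-1)=1$ recovers exactly \eqref{ineq1.1}.

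For the range $1 < p \le 2$, all three inequalities reverse: the convexity of $t \mapsto t^{p/2}$ becomes concavity, so the Jensen-type steps flip, and Minkowski's inequality in $L^{p-1}$ reverses on nonnegative functions since $0 < p-1 \le 1$; the one-variable argument for the second pointwise inequality also reverses in sign. The derivation of the reverse form of \eqref{ineq1.1} from the reverse form of \eqref{ineq1.2} proceeds exactly as above, with all inequality signs flipped.

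The main obstacle is the second pointwise inequality, since it intertwines the conjugate exponents $p$ and $q$ and does not reduce to a single application of convexity or the parallelogram identity; it requires a careful derivative analysis of $h(t)$ on $[0,1]$. Everything else is a combination of Jensen, the parallelogram identity, Minkowski's inequality, and the symmetrization $(x,y) \leftrightarrow \bigl((x+y)/2,(x-y)/2\bigr)$.
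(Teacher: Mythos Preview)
The paper does not contain a proof of this theorem; Theorem~1.1 is quoted from Clarkson's 1936 paper \cite{c} as background. The paper only remarks that Clarkson's original proof of \eqref{ineq1.2} is ``rather long and nontrivial,'' that Clarkson deduced \eqref{ineq1.3} from \eqref{ineq1.1}, that Ramaswamy \cite{r} later gave a direct H\"older-based proof of \eqref{ineq1.3}, and that \eqref{ineq1.1} and \eqref{ineq1.2} are equivalent via the substitution $u=x+y$, $v=x-y$---exactly the symmetrization you employ in your last step.

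Your sketch is a standard modern proof and is correct in outline. The parallelogram identity plus convexity of $t\mapsto t^{p/2}$ handles \eqref{ineq1.3}; the Minkowski step in $L^{p-1}$ (using $\bigl\||x|^q\bigr\|_{p-1}=\|x\|_p^q$ since $q(p-1)=p$) is the usual route to \eqref{ineq1.2} once the scalar inequality is known; and the symmetrization recovers \eqref{ineq1.1}. The only genuinely nontrivial piece is the scalar inequality $(1+t)^p+(1-t)^p\le 2(1+t^q)^{p-1}$ on $[0,1]$. Your reduction from complex $a,b$ to nonnegative reals is valid (for fixed $|a|,|b|$ the expression $|a+b|^p+|a-b|^p$ is maximized when $a,b$ have the same argument, by convexity of $s\mapsto s^{p/2}$), and the endpoint and derivative checks you list are correct, but ``a one-variable calculus argument on $h''$'' undersells the work required---this is precisely the step that makes Clarkson's original argument lengthy. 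That is a matter of detail rather than a gap. Since the paper itself offers no proof of Theorem~1.1, there is nothing further to compare against.
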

Clarkson pointed out that for all values
of $p$  the right hand side of  \eqref{ineq1.3} is
equivalent to the left hand side, while \eqref{ineq1.1}
is equivalent to \eqref{ineq1.2}; to see this,
set $x+y=u$, $x-y=y$ and reduce.
Notice that the proof of \eqref{ineq1.2}
 (\cite[Proof of Theorem 2]{c}) is rather long and nontrivial. 
In this proof Clarkson deduced \eqref{ineq1.3} from \eqref{ineq1.1}.
A direct simple  proof of the inequality  \eqref{ineq1.3},  
based on the classical  H\"{o}lder inequality was given 
in \cite{r}.

For each $1\le p<+\infty$ 
denote by $l_p^+$ the set of all nonnegative 
real sequences in $l_p$,
and by  $L_p^+$ the set of all nonnegative real-valued functions in $L_p$.
In this note we prove an extension of  the inequality \eqref{ineq1.3}
for the spaces $l_p^+$ (or $L_p^+$) with $p\ge 2$ as follows.
   \begin{theorem}
Let $2\le p\le q<+\infty$. Then for all $x$ 
and $y$ in $l_p^+$ (or $L_p^+$) we have
   \begin{equation}\label{ineq1.7}
2(\Vert x\Vert_p^q+\Vert y\Vert_p^q)
\le\Vert x+y\Vert_p^q  +\Vert x-y\Vert_p^q. 
    \end{equation}  
    \end{theorem}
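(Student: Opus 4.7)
The plan is to reduce the inequality \eqref{ineq1.7} to the already-established inequality \eqref{ineq1.3} by two scalar steps: one convexity step in the exponent and one elementary power-mean comparison on the 2-vector $(\|x\|_p,\|y\|_p)$. Write $a=\|x+y\|_p^p$ and $b=\|x-y\|_p^p$, and set $s=q/p\ge 1$. The goal is to lower-bound $\|x+y\|_p^q+\|x-y\|_p^q = a^s+b^s$ and compare it to $2(\|x\|_p^q+\|y\|_p^q)$.

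First I would apply Jensen's inequality to the convex function $t\mapsto t^{s}$ on $[0,\infty)$ at the two points $a,b$, giving
\[
a^s+b^s\ \ge\ 2\left(\frac{a+b}{2}\right)^{s}.
\]
Next, from the left-hand inequality of Clarkson's \eqref{ineq1.3}, $\tfrac{1}{2}(a+b)\ge \|x\|_p^p+\|y\|_p^p$, and since $t\mapsto t^{s}$ is nondecreasing, this yields
\[
\|x+y\|_p^q+\|x-y\|_p^q\ \ge\ 2\bigl(\|x\|_p^p+\|y\|_p^p\bigr)^{q/p}.
\]

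The last step is the purely scalar inequality
\[
(\alpha^p+\beta^p)^{q/p}\ \ge\ \alpha^q+\beta^q\qquad(\alpha,\beta\ge 0,\ q\ge p),
\]
applied with $\alpha=\|x\|_p$, $\beta=\|y\|_p$. This is nothing but the monotonicity of the $\ell_p$-norm in $p$ on the 2-vector $(\alpha,\beta)$, and it reduces (after dividing by $\max(\alpha,\beta)^q$ and setting $t=\min/\max$ raised to $p$) to the standard fact $(1+t)^s\ge 1+t^s$ for $t\in[0,1]$, $s\ge 1$, which follows from differentiating in $t$. Chaining the three inequalities gives \eqref{ineq1.7}.

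I do not foresee a genuine obstacle: each individual step is a one-line classical inequality, and the only thing to notice is that the job of the Clarkson step is to compare the two \emph{$p$-th powers} on either side, while the convexity of $t^{q/p}$ is what bridges the gap between exponent $p$ on the left of \eqref{ineq1.3} and exponent $q$ in \eqref{ineq1.7}. It is worth remarking that the argument sketched above never uses the hypothesis $x,y\ge 0$; it works verbatim for arbitrary $x,y\in l_p$ (or $L_p$), so the nonnegativity in the theorem's statement is not actually needed for the basic inequality \eqref{ineq1.7} and presumably plays its essential role only in the sharper form mentioned in the abstract.
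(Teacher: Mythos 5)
Your proof is correct, but it follows a genuinely different and considerably shorter route than the paper. The paper proves Theorem 1.2 by first establishing Proposition 1.4 (the refined inequality $\Vert u+v\Vert_p^q+\Vert u-v\Vert_p^q\ge 2(\Vert u\Vert_p^q+2^{q-2}\Vert v\Vert_p^q)$ for coordinatewise-dominated pairs $u\ge v$), which itself rests on a monotonicity lemma for the function $\varphi(t)=\Vert u+tv\Vert_p^q+\Vert u-tv\Vert_p^q-2^{q-1}(\Vert u\Vert_p^q+\Vert tv\Vert_p^q)$ proved via a derivative estimate; it then reduces general nonnegative $x,y$ to the dominated case by a rearrangement argument (Lemmas 2.2 and 2.3, swapping $x_i$ and $y_i$ wherever $x_i<y_i$). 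You instead chain three one-line facts: convexity of $t\mapsto t^{q/p}$ applied to $a=\Vert x+y\Vert_p^p$ and $b=\Vert x-y\Vert_p^p$, the classical left-hand Clarkson inequality \eqref{ineq1.3} (which the paper quotes as known, with a short proof in the cited reference of Ramaswamy), and the monotonicity $(\alpha^p+\beta^p)^{q/p}\ge\alpha^q+\beta^q$ of the two-dimensional $\ell_p$-norm in $p$. Each step is valid, the hypotheses $p\ge 2$ and $q\ge p$ are used exactly where needed, and your observation that nonnegativity is never invoked is also correct: the left half of \eqref{ineq1.3} holds for arbitrary complex $x,y\in L_p$ with $p\ge 2$, so your argument proves \eqref{ineq1.7} in that generality, which is strictly stronger than the theorem as stated. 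What your route does \emph{not} give is the sharper constant $2^{q-2}$ of Proposition 1.4 and Corollary 1.5; that refinement is where the positivity and domination hypotheses, and hence the paper's heavier machinery, are actually doing work.
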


\begin{remark}
Note that for $p=q\ge 2$  the inequality \eqref{ineq1.7} 
reduces to the Clarkson's inequality on the left hand side of
 \eqref{ineq1.3}.

On the other hand, if $2\le p\le q<+\infty$, 
then $1/p+1/q=1$ only for $p=q=2$, and
thus  the inequality \eqref{ineq1.7} 
cannot be derived from any Clarkson's inequalities
in Theorem 1.1. 
\end{remark} 

The following result is basic for the proof of Theorem 1.2.

\begin{proposition}
Let $2\le p\le q<+\infty$ and let 
$u=(u_1,u_2,\ldots,u_n,\ldots)$ and $v=(v_1,v_2,\ldots,v_n,\ldots)$ 
be nonnegative sequences  in $l_p^+$ such that $u_i\ge v_i$ for each 
$i=1,2,\ldots n,\ldots$  (or $u$ and $v$ be functions 
in $L_p^+$ such that $u\ge v$ a.e. on $X$). Then
   \begin{equation}\label{ineq1.4}
\Vert u+v\Vert_p^q  +\Vert u-v\Vert_p^q
\ge 2(\Vert u\Vert_p^q+2^{q-2}\Vert v\Vert_p^q).
  \end{equation}
In particular, for $q=p\ge 2$, we have
    \begin{equation}\label{ineq1.5}
\Vert u+v\Vert_p^p  +\Vert u-v\Vert_p^p
\ge 2(\Vert u\Vert_p^p+2^{p-2}\Vert v\Vert_p^p).
  \end{equation} 
   \end{proposition}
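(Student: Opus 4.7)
My plan is to reduce $(1.4)$ to a pointwise numerical estimate. Concretely, I would first establish the lemma: for all $a\ge b\ge 0$ and $s\ge 2$,
\begin{equation*}
(a+b)^s+(a-b)^s\;\ge\;2a^s+2^{s-1}b^s. \qquad (\ast)
\end{equation*}
To prove $(\ast)$, normalize $a=1$ and set $t=b/a\in[0,1]$, reducing it to $g(t):=(1+t)^s+(1-t)^s-2-2^{s-1}t^s\ge 0$. One checks $g(0)=0$, $g(1)=2^{s-1}-2\ge 0$, and $g'(0)=g'(1)=0$. For integer $s$ a clean argument uses the binomial expansion $(1+t)^s+(1-t)^s=2\sum_{k\ge 0}\binom{s}{2k}t^{2k}$ together with $t^{2k}\ge t^s$ on $[0,1]$ for $2k\le s$ and $\sum_{k\ge 1}\binom{s}{2k}=2^{s-1}-1\ge 2^{s-2}$, which yields $(\ast)$ immediately. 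For noninteger $s\ge 2$ I would instead analyze $g''$, noting $g''(0)=2s(s-1)>0$ for $s>2$, together with the endpoint data, to conclude $g\ge 0$ on $[0,1]$.

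Applying $(\ast)$ with $s=p$ to each pair $(a,b)=(u_i,v_i)$ (respectively, to $(u(x),v(x))$ $\mu$-a.e.) and summing over $i$ (resp.\ integrating over $X$) produces $(1.5)$ directly. This settles the special case $q=p$.

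For the general case $q>p$, my plan is to work at the norm level by setting $A=\|u+v\|_p^p$, $B=\|u-v\|_p^p$, $P=\|u\|_p^p$, $Q=\|v\|_p^p$. Then $(1.5)$ reads $A+B\ge 2P+2^{p-1}Q$; the pointwise monotonicities $(u_i-v_i)^p\le u_i^p$ and $v_i^p\le u_i^p$ (valid since $u_i\ge v_i\ge 0$) give $B\le P$ and $P\ge Q$, and combining these yields the one-sided bound $A\ge P+2^{p-1}Q$. The target $(1.4)$ is equivalent to $A^{q/p}+B^{q/p}\ge 2P^{q/p}+2^{q-1}Q^{q/p}$, and the plan is to deduce this via the convexity of $x\mapsto x^{q/p}$ (which holds since $q/p\ge 1$) combined with the one- and two-sided bounds on $A,B$.

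The hardest step is pinning down the precise constant $2^{q-1}$ on the right-hand side. A naive Jensen-type bound $(A+B)^{q/p}\le 2^{q/p-1}(A^{q/p}+B^{q/p})$ combined with $(A+B)^{q/p}\ge(2P+2^{p-1}Q)^{q/p}$ is too lossy for $q$ much larger than $p$: taking $p=2$, $q=4$, $P=Q$, the resulting estimate gives only $A^{q/p}+B^{q/p}\ge 8P^2$, while the target demands $10P^2$. The reason is that the actual $(A,B)$-configuration forced by $u\ge v\ge 0$ is far more polarized than Jensen registers (for $u=v$ one has $A=4P$, $B=0$, whence $A^2+B^2=16P^2$). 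Accordingly I expect the final argument must either apply $(\ast)$ also with $s=q$ pointwise and then bridge from the resulting $\|\cdot\|_q$-norm Clarkson inequality back to the $\|\cdot\|_p$-norm setting via $\|\cdot\|_q\le\|\cdot\|_p$ and the pointwise information $u\ge v\ge 0$, or perform a finer joint analysis of $A,B$ as functionals of $u,v$ that does not simply rely on their aggregate norm values.
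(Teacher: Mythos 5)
Your argument is complete only for the special case $q=p$: the pointwise inequality $(\ast)$ is true (it is essentially the paper's Corollary 1.5), and summing it over $i$ does give \eqref{ineq1.5}. (Even there, your proof of $(\ast)$ for non-integer $s$ is shaky: $g''$ is \emph{not} of one sign near $t=1$, since $g''(1)=s(s-1)(2^{s-2}-2^{s-1})<0$ for $s>2$; a clean proof writes $g(t)=\int_0^t s\left((1+x)^{s-1}-(1-x)^{s-1}-(2x)^{s-1}\right)dx$ and invokes the superadditivity $(a+b)^r\ge a^r+b^r$ with $a=1-x$, $b=2x$, $r=s-1\ge 1$.) For the main case $q>p$, however, you do not have a proof. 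You correctly diagnose that passing to the aggregate quantities $A,B,P,Q$ and applying convexity of $x\mapsto x^{q/p}$ loses too much, but the two escape routes you then name are not carried out, and the first one fails: applying $(\ast)$ with $s=q$ pointwise yields $\Vert u+v\Vert_q^q+\Vert u-v\Vert_q^q\ge 2\Vert u\Vert_q^q+2^{q-1}\Vert v\Vert_q^q$, and while $\Vert\cdot\Vert_p\ge\Vert\cdot\Vert_q$ does upgrade the left-hand side, it points the wrong way on the right-hand side (you would need $\Vert u\Vert_q\ge\Vert u\Vert_p$). So the proposal has a genuine gap exactly where the proposition's content lies.

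The paper closes this gap with the ``finer joint analysis'' you anticipated, but in a specific and simple form: it fixes $u,v$ and proves that
$\varphi(t)=\Vert u+tv\Vert_p^q+\Vert u-tv\Vert_p^q-2^{q-1}\left(\Vert u\Vert_p^q+t^q\Vert v\Vert_p^q\right)$
is nondecreasing on $[0,1]$, whence $\varphi(1)\ge\varphi(0)=(2-2^{q-1})\Vert u\Vert_p^q$, which is \eqref{ineq1.4}. The monotonicity is verified by differentiating the sums directly (using $u_i\ge v_i\ge 0$ to remove absolute values) and estimating $\varphi'(t)$ from below by $0$ via the same superadditivity $(u_i+v_it)^r\ge(u_i-v_it)^r+(2v_it)^r$ for $r\in\{p,p-1\}$, together with $q/p-1\ge 0$ to split the outer power across the two summands. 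In other words, the elementary inequality you already isolated is the right engine, but it must be applied inside the derivative of the norm-level function of $t$, not to the scalars $A,B$ after the norms have been formed; that is the step your proposal is missing.
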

 As an immediate consequence, we obtain 
an improvement of the Clarkson's inequality
on the left hand side of \eqref{ineq1.3} in the real case.
   \begin{corollary}
For real numbers $x\ge y\ge 0$ and $q\ge 2$ holds
     \begin{equation}\label{ineq1.6}
2(x^q+2^{q-2}y^q)\le(x+y)^q  +(x-y)^q. 
  \end{equation} 
    \end{corollary}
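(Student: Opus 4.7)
The plan is to deduce this as a direct specialization of Proposition 1.4 by reducing the scalar inequality to a one-dimensional case of the sequence inequality \eqref{ineq1.4}. First I would fix any real $p$ with $2 \le p \le q$ (for concreteness $p=2$ will do), and define the sequences $u=(x,0,0,\ldots)$ and $v=(y,0,0,\ldots)$ in $l_p^+$. Since $x\ge y\ge 0$, the componentwise hypothesis $u_i\ge v_i$ required by Proposition 1.4 is trivially satisfied (the first coordinates give $x\ge y$, and all other coordinates vanish).

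Next I would compute the four norms appearing in \eqref{ineq1.4}. Because only the first coordinate contributes, one has $\Vert u\Vert_p=x$ and $\Vert v\Vert_p=y$. Similarly $u+v=(x+y,0,0,\ldots)$ and $u-v=(x-y,0,0,\ldots)$, so $\Vert u+v\Vert_p=x+y$ and (using $x\ge y\ge 0$, so that $x-y\ge 0$) $\Vert u-v\Vert_p=x-y$. Raising each of these to the $q$-th power and inserting into \eqref{ineq1.4} yields exactly
\[
(x+y)^q+(x-y)^q \;\ge\; 2\bigl(x^q+2^{q-2}y^q\bigr),
\]
which is the desired inequality \eqref{ineq1.6}.

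There is essentially no obstacle here: the only thing to verify is the elementary identification of the norms in the one-dimensional situation and the sign condition $x-y\ge 0$ which ensures $\Vert u-v\Vert_p=x-y$ rather than $|x-y|$ (it makes no difference, as $|x-y|^q=(x-y)^q$ in either case). All the analytic content is contained in Proposition 1.4, and the corollary is simply its scalar shadow.
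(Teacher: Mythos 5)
Your proof is correct and matches the paper's intent exactly: the corollary is stated there as an ``immediate consequence'' of Proposition 1.4, and the intended derivation is precisely this one-dimensional specialization with $u=(x,0,0,\ldots)$, $v=(y,0,0,\ldots)$. Your verification of the hypotheses and the norm computations is accurate, so nothing further is needed.
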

The proofs of Proposition 1.4 and Theorem 1.2
are given in the next section and they are  very simple and 
elementary.
             \section{Proofs of Proposition 1.4  and Theorem 1.2} 
  The following interesting lemma is basic in the proofs of
Proposition 1.4. 

\begin{lemma} Let $2\le p\le q<+\infty$ and let 
$u=(u_1,u_2,\ldots,u_n)$ and 
$v=(v_1,v_2,\ldots,v_n)$ be  $n$-tuples 
of nonnegative real numbers such that $u_i\ge v_i$ for each $i=1,2,\ldots,n$.
Define the function $\varphi$ from $[0,1]$ into $\Bbb{R}$
by 
  \begin{equation}\label{ineq2.1}
\varphi(t)=\Vert u+tv\Vert_p^q  +\Vert u-tv\Vert_p^q-2^{q-1}
(\Vert u\Vert_p^q+\Vert tv\Vert_p^q),\quad t\in [0,1].
    \end{equation}
Then $\varphi$ is a nondecreasing  function on $[0,1]$.
  \end{lemma}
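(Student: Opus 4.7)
The plan is to show that $\varphi$ is $C^1$ on $[0,1]$ with $\varphi'(t)\ge 0$ on $(0,1)$, from which the monotonicity follows.

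First I would compute $\varphi'(t)$ using the chain rule on $\Vert u\pm tv\Vert_p^q=\bigl(\sum_i(u_i\pm tv_i)^p\bigr)^{q/p}$ (note $u_i-tv_i\ge 0$ on $[0,1]$ since $u_i\ge v_i$, so the $p$-th powers cause no sign issues), obtaining
\[
\frac{\varphi'(t)}{q}=\Vert u+tv\Vert_p^{q-p}\sum_i v_i(u_i+tv_i)^{p-1}-\Vert u-tv\Vert_p^{q-p}\sum_i v_i(u_i-tv_i)^{p-1}-2^{q-1}t^{q-1}\Vert v\Vert_p^q.
\]
Write $A=\Vert u+tv\Vert_p^{q-p}$, $B=\Vert u-tv\Vert_p^{q-p}$, and $S_\pm=\sum_i v_i(u_i\pm tv_i)^{p-1}$. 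I split
\[
AS_+-BS_-=A(S_+-S_-)+(A-B)S_-,
\]
both summands of which are nonnegative (for the second, $A\ge B$ because $u+tv\ge u-tv\ge 0$ componentwise and $q-p\ge 0$). Discarding $(A-B)S_-$, it remains to show $A(S_+-S_-)\ge 2^{q-1}t^{q-1}\Vert v\Vert_p^q$.

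The remaining work rests on two elementary scalar estimates. The first is
\[
(a+x)^r-(a-x)^r\ge(2x)^r\qquad(a\ge x\ge 0,\ r\ge 1),
\]
which I would prove from the integral identities $(a+x)^r-(a-x)^r=r\int_{-x}^{x}(a+\tau)^{r-1}d\tau$ and $(2x)^r=r\int_{-x}^{x}(x+\tau)^{r-1}d\tau$: the difference equals $r\int_{-x}^{x}[(a+\tau)^{r-1}-(x+\tau)^{r-1}]d\tau\ge 0$ because $a\ge x$ and $s\mapsto s^{r-1}$ is nondecreasing on $[0,\infty)$. Applying this with $a=u_i$, $x=tv_i$, $r=p-1$ (permissible since $u_i\ge v_i\ge tv_i$), multiplying by $v_i$ and summing over $i$ yields
\[
S_+-S_-\ge(2t)^{p-1}\Vert v\Vert_p^p.
\]
The second estimate is the pointwise bound $u+tv\ge 2tv$ componentwise (again from $u\ge v\ge tv$ on $[0,1]$), which gives
\[
A=\Vert u+tv\Vert_p^{q-p}\ge(2t)^{q-p}\Vert v\Vert_p^{q-p}.
\]
Multiplying the two bounds produces $A(S_+-S_-)\ge 2^{q-1}t^{q-1}\Vert v\Vert_p^q$, exactly as required.

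The main obstacle I anticipate is that for $q>p$ the quantities $\Vert u\pm tv\Vert_p^q$ do not split as sums over coordinates, so there is no purely termwise reduction. The trick is to keep the \emph{larger} factor $\Vert u+tv\Vert_p^{q-p}$ outside after the algebraic rewrite $AS_+-BS_-=A(S_+-S_-)+(A-B)S_-$, because any attempt to bound $\Vert u-tv\Vert_p$ from below fails in the limiting case $u=v$, $t=1$, where it vanishes. The hypothesis $u_i\ge v_i$ is used in an essential way twice, once in the scalar inequality (requiring $u_i\ge tv_i$) and once in the pointwise domination $u+tv\ge 2tv$.
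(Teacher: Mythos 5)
Your proof is correct and follows essentially the same route as the paper: both differentiate $\varphi$ and reduce $\varphi'(t)\ge 0$ to the superadditivity $(a+b)^r\ge a^r+b^r$ for $r\ge 1$ (your integral argument proves exactly this with $a-x$ and $2x$ in the roles of $a$ and $b$). The only difference is bookkeeping: the paper expands $\Vert u+tv\Vert_p^{q-p}S_+$ by applying that scalar inequality at both exponents $p$ and $p-1$ and dropping cross terms to get $\Vert u-tv\Vert_p^{q-p}S_-+2^{q-1}t^{q-1}\Vert v\Vert_p^q$ directly, while you first discard $(A-B)S_-\ge 0$ and then bound $A(S_+-S_-)$ using the componentwise domination $u+tv\ge 2tv$; the two computations are equivalent.
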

\begin{proof}
Obviously, \eqref{ineq2.1} may be written as
    \begin{equation}\label{ineq2.2}\begin{split}
\varphi(t)=&\big(\sum_{i=1}^n|u_i+v_it|^p\big)^{q/p}+
\big(\sum_{i=1}^n|u_i-v_it|^p\big)^{q/p}\\
&-2^{q-1}\Big(\big(\sum_{i=1}^n|u_i|^p\big)^{q/p}+
\big(\sum_{i=1}^n|v_i|^p\big)^{q/p}t^q\Big).
  \end{split}\end{equation}
Note that the function $\varphi$ is continuous on $[0,1]$
and differentiable on the set $D:=(0,1)\setminus\{\pm u_i/v_i:\, v_i\not=0,
i=1,2,\ldots, n\}.$ Since for 
any fixed $1\le i\le n$ hold $u_i+v_it\ge 0$ and $u_i-v_it \ge u_i-v_i\ge 0,$
we have, $\frac{d}{dt}|u_i+v_it|=v_i$, and 
$\frac{d}{dt}|u_i-v_it|=-v_i$ for each $t\in D$. Hence,
 the derivative of $\varphi$ for $t\in D$ is
   \begin{equation}\label{ineq2.3}\begin{split}
\varphi'(t)=&q\left(\Big(\sum_{i=1}^n(u_i+v_it)^p\Big)^{(q/p)-1}\cdot
\Big(\sum_{i=1}^nv_i(u_i+v_it)^{p-1}\Big)\right.\\
&-\Big(\sum_{i=1}^n(u_i-v_it)^p\Big)^{(q/p)-1}\cdot
\Big(\sum_{i=1}^nv_i(u_i-v_it)^{p-1}\Big)\\
&-2^{q-1}\left.\Big(\sum_{i=1}^nv_i^p\Big)^{q/p}t^{q-1}\right).
       \end{split}\end{equation}
We will show that $\varphi'(t)\ge 0$ for each $t\in D$.
To show this, we use the well known inequality 
$(a+b)^r\ge a^r+b^r$ for real numbers $a\ge 0$, $b\ge 0$ and $r\ge 1$.
Applying this inequality in the form 
$(u_i+v_it)^r\ge (u_i-v_it)^r+(2v_it)^r$ for $1\le i\le n$,
 with $r\in\{p,p-1\}$, and using that $q/p \ge 1$,
 we estimate the first term on the right of \eqref{ineq2.3} as 
     \begin{equation}\label{ineq2.4}\begin{split}
&\Big(\sum_{i=1}^n(u_i+v_it)^p\Big)^{(q/p)-1}\cdot
\Big(\sum_{i=1}^nv_i(u_i+v_it)^{p-1}\Big)\\
\ge& \Big(\sum_{i=1}^n(u_i-v_it)^p+\sum_{i=1}^n(2v_it)^p\Big)^{(q/p)-1}\cdot
\Big(\sum_{i=1}^nv_i(u_i-v_it)^{p-1}+\sum_{i=1}^nv_i(2v_it)^{p-1}\Big)\\
= &\Big(\sum_{i=1}^n(u_i-v_it)^p+2^pt^p\sum_{i=1}^nv_i^p\Big)^{(q/p)-1}\cdot
\Big(\sum_{i=1}^nv_i(u_i-v_it)^{p-1}\Big)\\
&+ \Big(\sum_{i=1}^n(u_i-v_it)^p+2^pt^p\sum_{i=1}^nv_i^p\Big)^{(q/p)-1}\cdot
\Big(2^{p-1}t^{p-1}\sum_{i=1}^nv_i^{p}\Big)\\
\ge & \Big(\sum_{i=1}^n(u_i-v_it)^p\Big)^{(q/p)-1}\cdot
\Big(\sum_{i=1}^nv_i(u_i-v_it)^{p-1}\Big)\\
&+ \Big(2^pt^p\sum_{i=1}^nv_i^p\Big)^{(q/p)-1}\cdot
\Big(2^{p-1}t^{p-1}\sum_{i=1}^nv_i^{p}\Big)\\
= & \Big(\sum_{i=1}^n(u_i-v_it)^p\Big)^{(q/p)-1}\cdot
\Big(\sum_{i=1}^nv_i(u_i-v_it)^{p-1}\Big)
+ 2^{q-1}\Big(\sum_{i=1}^nv_i^p\Big)^{q/p}t^{q-1}.
    \end{split}\end{equation}
Now inserting the inequality \eqref{ineq2.4} into  \eqref{ineq2.3},
we immediately obtain that for each $t\in D$
 \begin{equation}\label{ineq2.5}
\varphi'(t)\ge q\left(2^{q-1}\Big(\sum_{i=1}^nv_i^p\Big)^{q/p}t^{q-1}
-2^{q-1}\left(\sum_{i=1}^nv_i^p\right)^{q/p}t^{q-1}\right)=0.
  \end{equation}
Therefore, $\varphi'(t)\ge 0$ for each $t\in D$,
and since $\varphi$ is a continuous function on $[0,1]$,
we infer that $\varphi$ is a nondecreasing function on $[0,1]$,
and the proof is completed.
\end{proof}

\begin{proof}[Proof of Proposition 1.4.]
We first consider the case $2\le p\le q<+\infty$
related to  the space $l_p$. By the continuity of $l_p$-norm,
it is enough to prove the inequality \eqref{ineq1.4} for two arbitrary
$n$-tuples  $u=(u_1,u_2,\ldots,u_n)$ and 
$v=(v_1,v_2,\ldots,v_n)$ of nonnegative real numbers with $u_i\ge v_i$ for
each $i=1,2,\ldots,n$. Since the function 
$\varphi$ defined by \eqref{ineq2.1} from Lemma 2.1  increases on $[0,1]$, 
we have
    \begin{equation}\label{ineq2.6}
\Vert u+v\Vert_p^q  +\Vert u-v\Vert_p^q-
2^{q-1}(\Vert u\Vert_p^q+\Vert v\Vert_p^q)=\varphi(1)\ge \varphi(0)=
2\Vert u\Vert_p^q-2^{q-1}\Vert u\Vert_p^q,
  \end{equation}
or equivalently,
     \begin{equation}\label{ineq2.7}
\Vert u+v\Vert_p^q  +\Vert u-v\Vert_p^q
\ge 2(\Vert u\Vert_p^q+2^{q-2}\Vert v\Vert_p^q),
  \end{equation}
as desired.

In order to prove that the inequality \eqref{ineq1.4} can be extended
for any two functions $u,v\in L_p^+$ such that $u\ge v$ a.e. in $L_p^+$,
 it is enough to apply a 
standard argument that the set  of all measurable simple functions
$s:X\to [0,+\infty)$ forms a dense subset in $L_p$,
and so the result follows by the continuity of the norm
(see \cite[Proof of Theorem 2]{c}; also cf. \cite[Proof of Theorem 2.3]{si}).
\end{proof}

For the proof of Theorem 1.2 we will need still  two following
lemmas.

\begin{lemma} Let $A,a,B,b$ and $r\ge 1$ be nonnegative real numbers 
such that $A\ge B$ and $a> b$. Then
   \begin{equation}\label{ineq2.8}
(A+a)^r+(B+b)^r > (A+b)^r+(B+a)^r.
  \end{equation}
\end{lemma}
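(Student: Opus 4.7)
The plan is to reduce this four-term inequality to a one-variable monotonicity claim. Define
\[
h(x) = (x+a)^r - (x+b)^r, \qquad x \ge 0.
\]
A direct rearrangement of the two sides of \eqref{ineq2.8} gives
\[
(A+a)^r + (B+b)^r - (A+b)^r - (B+a)^r = h(A) - h(B),
\]
so it suffices to show that $h(A) > h(B)$ under the hypothesis $A \ge B$.

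For this, I would differentiate: $h'(x) = r\bigl[(x+a)^{r-1} - (x+b)^{r-1}\bigr]$. Since $a > b \ge 0$, we have $x+a > x+b \ge 0$ for every $x \ge 0$; and since $y \mapsto y^{r-1}$ is nondecreasing on $[0,+\infty)$ for $r \ge 1$, and strictly increasing on $(0,+\infty)$ for $r > 1$, we conclude $h'(x) \ge 0$ throughout, with strict positivity when $r > 1$. Integrating from $B$ to $A$ then yields $h(A) \ge h(B)$, as desired.

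The only delicate point is the strictness asserted in the lemma: when $r = 1$ or $A = B$, both sides of \eqref{ineq2.8} collapse to the same value, so the strict inequality must be read as a claim in the nondegenerate regime $A > B$ and $r > 1$, where $h(A) - h(B)$ is the integral of a strictly positive function on $[B,A]$. Modulo this reading the argument is a one-line calculus fact, so I do not anticipate any substantial obstacle; the only step that requires even momentary thought is identifying the right auxiliary function $h$, which the shape of \eqref{ineq2.8} suggests immediately.
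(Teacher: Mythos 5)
Your proof is correct, and it takes a genuinely different route from the paper's --- one that is in fact safer. The paper fixes the sum $c=a+b$ and differentiates $f(t)=(A+t)^r+(B+c-t)^r$ on $[c/2,c]$; you instead collapse the difference of the two sides into $h(A)-h(B)$ with $h(x)=(x+a)^r-(x+b)^r$ and show $h'\ge 0$. Both arguments rest on the monotonicity of $y\mapsto y^{r-1}$, i.e.\ on the convexity of $y\mapsto y^r$, but your decomposition buys something concrete: the paper's concluding step relies on the chain ``$(a+b)/2<b<a$'', which is false, since $a>b$ forces $b<(a+b)/2$; hence $f(b)$ is evaluated outside the interval on which the monotonicity of $f$ was established, and the paper's argument as written does not close. (It can be repaired by comparing $f(t)$ with $f(c-t)$ for $t\ge c/2$, but the auxiliary function one then differentiates is essentially your $h$ with the roles of $(A,B)$ and $(a,b)$ exchanged.) You are also right to flag the strictness issue: when $r=1$ or $A=B$ the two sides of \eqref{ineq2.8} coincide, so the lemma as stated fails in those degenerate cases and only the non-strict inequality holds there --- which is all that the application in Lemma 2.3 ultimately needs, since \eqref{ineq2.12} is non-strict; the paper's own proof silently assumes $r>1$ when it asserts $(A+t)^{r-1}-(B+c-t)^{r-1}>0$.
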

\begin{proof}
Put $c=a+b$, and define the function $f$ from $[c/2,c]$ into $\Bbb{R}$
by 
  \begin{equation}\label{ineq2.9}
f(t)=(A+t)^r+(B+c-t)^r,\quad t\in \left[\frac{c}{2},c\right].
  \end{equation} 
Then
 \begin{equation}\label{ineq2.10}
f'(t)=r((A+t)^{r-1}-(B+c-t)^{r-1}),\quad t\in \left[\frac{c}{2},c\right].
 \end{equation} 
Since $A\ge B$, we see that $A+t> B+c-t\ge 0$ for $t\in (c/2,c]$;
so, for such a  $t$, we have  $(A+t)^{r-1}-(B+c-t)^{r-1}> 0$.
This shows that $f$ is an increasing function on  $(c/2,c]=((a+b)/2,a+b]$.
This together with the fact that $(a+b)/2< b< a\le a+b$ yields
$f(a)> f(b)$, which is actually the inequality \eqref{ineq2.8}. 
   \end{proof}

    \begin{lemma} Let $r\ge 1$ and let $x=(x_1,x_2,\ldots,x_n)$ and 
$y=(y_1,y_2,\ldots,y_n)$ be $n$-tuples 
of nonnegative real numbers.
Define sequences $x'=(u_1,u_2,\ldots,u_n)$ and $y'=(v_1,v_2,\ldots,v_n)$
as 
   \begin{equation}\label{ineq2.11}\begin{split}
 u_i=x_i \,\,\, and \,\,\, v_i & =y_i \,\,\, if\,\,\, x_i\ge y_i\\
u_i=y_i \,\,\, and \,\,\, v_i & =x_i \,\,\, if\,\,\, x_i< y_i
    \end{split}\end{equation}
for each $i=1,2,\ldots n$. Then 
   \begin{equation}\label{ineq2.12}
\left(\sum_{i=1}^n u_i\right)^r+\left(\sum_{i=1}^n y_i\right)^r\ge
\left(\sum_{i=1}^n x_i\right)^r+\left(\sum_{i=1}^n y_i\right)^r.
  \end{equation}
\end{lemma}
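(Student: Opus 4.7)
My plan is to transform $(x,y)$ into $(x',y')$ by a sequence of single-index swaps and use Lemma 2.2 at each step to conclude that $(\sum\cdot)^r+(\sum\cdot)^r$ does not decrease. The inequality as stated has $(\sum y_i)^r$ on both sides and so is trivial; I read the second summand on the left as $\bigl(\sum v_i\bigr)^r$, which is the only nontrivial reading and the one compatible with an appeal to Lemma 2.2. The construction of $u,v$ and the right-hand side $(\sum x_i)^r+(\sum y_i)^r$ are both symmetric in $x\leftrightarrow y$, so I may assume $\sum_i x_i\ge\sum_i y_i$; otherwise interchange the vectors at the outset.

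Call an index $i$ \emph{bad} if $x_i<y_i$, and induct on the number $k$ of bad indices. For $k=0$, $u_i=x_i$ and $v_i=y_i$ for every $i$, so the inequality becomes an equality. For the inductive step, fix any bad index $i^*$ and form $\tilde x,\tilde y$ by setting $\tilde x_{i^*}:=y_{i^*}$, $\tilde y_{i^*}:=x_{i^*}$ and leaving all other coordinates unchanged. Writing $P:=\sum_{j\ne i^*}x_j$ and $Q:=\sum_{j\ne i^*}y_j$, the hypotheses $\sum x_j\ge \sum y_j$ and $y_{i^*}>x_{i^*}$ yield $P-Q=(\sum x_j-\sum y_j)+(y_{i^*}-x_{i^*})\ge 0$. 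Applying Lemma 2.2 with $A=P$, $B=Q$, $a=y_{i^*}$, $b=x_{i^*}$ then gives
\[
(P+y_{i^*})^r+(Q+x_{i^*})^r > (P+x_{i^*})^r+(Q+y_{i^*})^r,
\]
that is, $(\sum\tilde x)^r+(\sum\tilde y)^r > (\sum x)^r+(\sum y)^r$.

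The pair $(\tilde x,\tilde y)$ has $k-1$ bad indices, produces the same $u,v$ as $(x,y)$ (because $\{\tilde x_j,\tilde y_j\}=\{x_j,y_j\}$ for each $j$), and still satisfies $\sum\tilde x\ge\sum\tilde y$, since $\sum\tilde x-\sum\tilde y=(\sum x-\sum y)+2(y_{i^*}-x_{i^*})\ge 0$. Applying the inductive hypothesis to $(\tilde x,\tilde y)$ and chaining with the inequality above finishes the inductive step.

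The principal obstacle is keeping Lemma 2.2's hypothesis $A\ge B$ satisfied throughout the induction. Naive iteration without the normalization $\sum x\ge\sum y$ fails: on $x=(0,0)$, $y=(1,1)$ with $r>1$, a single swap sends the objective $2^r$ down to $2$. The initial WLOG step and the preservation check $\sum\tilde x\ge\sum\tilde y$ after each swap are precisely what keep Lemma 2.2 pointing in the useful direction at every swap.
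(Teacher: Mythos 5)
Your proof is correct and follows essentially the same route as the paper: normalize so that $\sum_i x_i\ge\sum_i y_i$, then eliminate the indices with $x_i<y_i$ one swap at a time, invoking Lemma 2.2 at each swap with $A=\sum_{j\ne k}x_j$ and $B=\sum_{j\ne k}y_j$. You also correctly repair the typo in the statement (the second left-hand summand should be $\bigl(\sum_{i=1}^n v_i\bigr)^r$), and your explicit check that the invariant $\sum_j\tilde x_j\ge\sum_j\tilde y_j$ survives each swap makes rigorous the step the paper compresses into ``repeating the above procedure successively.''
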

    \begin{proof} Without loss of generality, we can suppose that 
      \begin{equation}\label{ineq2.13}
S:=\left(\sum_{i=1}^n x_i\right)^r\ge \left(\sum_{i=1}^n y_i\right)^r:=T.
    \end{equation}
Clearly, in order to prove the inequality \eqref{ineq2.12}, 
it is suffices to show that if for some index $k$ holds $x_k<y_k$, 
then after  the interchange of $x_k$ and  $y_k$ the sum on the right hand side
of \eqref{ineq2.12} increases. Assume that  $x_k<y_k$ for some
$k\in\{1,2,\ldots n\}$, and denote new sums by
 \begin{equation}\label{ineq2.14}
S':=\left(\sum_{i\not=k} x_i+y_k\right)^r=
\left(\sum_{i=1}^nx_i-x_k+y_k\right)^r
\end{equation}
  \begin{equation}\label{ineq2.15}
T':=\left(\sum_{i\not=k} y_i+x_k\right)^r=
\left(\sum_{i=1}^ny_i-y_k+x_k\right)^r.
\end{equation}
If we denote $A=\sum_{i=1}^nx_i-x_k$ and $B=\sum_{i=1}^ny_i-y_k$,
then from  \eqref{ineq2.13} and $x_k<y_k$ we see that $A>B\ge 0$.
Now applying Lemma 2.2 with $a=y_k$ and $b=x_k$, we find that
   \begin{equation}\begin{split}\label{ineq2.16}
S'+T'&=(A+y_k)^r+(B+x_k)^r> (A+x_k)^r+(B+y_k)^r\\
&=(\sum_{i=1}^nx_i-x_k+x_k)^r+(\sum_{i=1}^ny_i-y_k+y_k)^r=
S+T.
  \end{split}\end{equation}
Hence, repeating the above procedure succesufely  for all  indices 
$k\in\{1,2,\ldots ,n\}$ such that $x_k<y_k$, we obtain 
our inequality \eqref{ineq2.12}.
\end{proof}
 
\begin{proof}[Proof of Theorem 1.2.]

We first consider the case $2\le p\le q<+\infty$
related to  the sequence space $l_p$. By the continuity of $l_p$-norm,
it is enough to prove the inequality \eqref{ineq1.7} for any two 
nonnegative $n$-tuples  $x=(x_1,x_2,\ldots,x_n)$ and 
$y=(y_1,y_2,\ldots,y_n)$. 
Because of symmetry, without loss of generality we can suppose 
that $\Vert x\Vert_p\ge \Vert y\Vert_p$.
 Accordingly, we can assume that $x_1\ge y_1$.
This fact, together with Lemma 2.3  with $x_i^p$ and $y_i^p$ 
instead of $x_i$ and $y_i$, respectively,
and with $u_i$, $v_i$, defined by \eqref{ineq2.11} 
($i=1,2,\ldots,n$),  yields that
    \begin{equation}\begin{split}\label{ineq2.17}
\big(\sum_{i=1}^n|u_i|^p\big)^{q/p}+\big(\sum_{i=1}^n|v_i|^p\big)^{q/p}
&\ge \big(\sum_{i=1}^n|x_i|^p\big)^{q/p}+\big(\sum_{i=1}^n|y_i|^p\big)^{q/p}\\
&= \Vert x\Vert_p^q+\Vert y\Vert_p^q.
  \end{split}\end{equation}
Obviously, the expression $\Vert x+y\Vert_p^q  +\Vert x-y\Vert_p^q$
is invariant under the interchange $x_i$ with $y_i$
for any $i=1,2,\ldots,n$, and therefore,
  \begin{equation}\label{ineq2.18}
\Vert x+y\Vert_p^q  +\Vert x-y\Vert_p^q=
\big(\sum_{i=1}^n|u_i+v_i|^p\big)^{q/p}+
\big(\sum_{i=1}^n|u_i-v_i|^p\big)^{q/p}.
  \end{equation}
Further, since $q\ge 2$, the inequality \eqref{ineq1.4} in Theorem 1.2
implies that   
  \begin{equation}\label{ineq2.19}
\big(\sum_{i=1}^n|u_i+v_i|^p\big)^{q/p}+
\big(\sum_{i=1}^n|u_i-v_i|^p\big)^{q/p}\ge 
2\big(\sum_{i=1}^n|u_i|^p\big)^{q/p}+\big(\sum_{i=1}^n|v_i|^p\big)^{q/p}.
  \end{equation}
Clearly, the relations \eqref{ineq2.17}, \eqref{ineq2.18} and \eqref{ineq2.19} 
immediately imply the inequality  \eqref{ineq1.7}, that is, 
  \begin{equation}\label{ineq2.20}
\Vert x+y\Vert_p^q  +\Vert x-y\Vert_p^q\ge 
2(\Vert x\Vert_p^q+\Vert y\Vert_p^q).
    \end{equation}  
Finally, recall that the inequality \eqref{ineq1.7}
may be extended to the space $L_p^+$, by applying the same argument
as noticed at the end of proof of Theorem 1.2.
\end{proof}
  \begin{remark}
Note that the method in our proof of Theorem 1.2 
cannot be applied for the proof of Clarkson's inequalities 
\eqref{ineq1.1}  and \eqref{ineq1.2}. For example, 
Clarkson's inequality  \eqref{ineq1.1} in the real case may be written as
  \begin{equation}\label{ineq2.21}
|x+y|^q+|x-y|^q\ge 2(|x|^p+|y|^p)^{q-1},\quad x,y\in\Bbb{R}, p=q/(q-1)\ge 2.
   \end{equation}
Assuming that $|x|\le |y|$ and setting $c= |x|/|y|\le 1$, 
the inequality \eqref{ineq2.21} is equivalent to
  \begin{equation}\label{ineq2.23}
(1+c)^q+(1-c)^q\ge 2(1+c^{p)})^{q-1},
 \quad 0\le c\le 1.
   \end{equation}
Next for a fixed $c\in [0,1]$,  consider the function $\psi$
 from $[0,1]$ into $\Bbb{R}$ defined by 
 \begin{equation}\label{ineq2.24}
\psi(t)=(1+ct)^q+(1-ct)^q- 2(1+c^pt^{p)})^{q-1},\quad t\in [0,1]
   \end{equation}
A direct calculation gives 
 \begin{equation}\label{ineq2.25}
\psi'(t)=qc(1+ct)^{q-1}-qc(1-ct)^{q-1}-2p(q-1)(1+c^pt^{p})^{q-2}c^pt^{p-1},
 t\in [0,1],
   \end{equation}
which by using that $p(q-1)=q$ and  setting $s=ct$, can be written as
   \begin{equation}\label{ineq2.26}
\psi'(t)=\chi (s)=qc((1+s)^{q-1}-(1-s)^{q-1}-2(1+s^{p})^{q-2}s^{p-1}),
s\in [0,c].
   \end{equation}
However,  direct calculations show that for several values of $p$ and $q$
with $2\le p\le q$ and a fixed $0<c\le 1$, the function $\chi(s)$ takes 
positive and negative values on the segment $[0,c]$. Thus, generally,
the function $\psi(t)$ is not monotone on $[0,c]$.
       \end{remark}

\end{document}